\numberwithin{equation}{section}
\theoremstyle{plain}
\newtheorem{teo}{Theorem}[section]
\newtheorem{lema}[teo]{Lemma}
\newtheorem{coro}[teo]{Corollary}
\newtheorem{prop}[teo]{Proposition}
 \theoremstyle{definition}
\newtheorem{defi}[teo]{Definition}
\newtheorem{conj}[teo]{Conjecture}
\newtheorem{obs}[teo]{Remark}
\newtheorem{ej}[teo]{Example}
\newcommand{\Z}{\mathbb{Z}}
\begin{document}

\title{Hypersimplices are Ehrhart Positive}

\author[L. Ferroni]{Luis Ferroni}
\thanks{The author is supported by the Marie Sk{\l}odowska-Curie PhD fellowship as part of the program INdAM-DP-COFUND-2015.}

\address{Universit\`a di Bologna, Dipartimento di Matematica, Piazza di Porta San Donato, 5, 40126 Bologna BO - Italia} 

\email{ferroniluis@gmail.com\\ luis.ferronirivetti2@unibo.it}

\subjclass[2010]{05B35, 52B20, 11B73}

\begin{abstract} 
	We consider the Ehrhart polynomial of hypersimplices. It is proved that these polynomials have positive coefficients and we give a combinatorial formula for each of them. This settles a problem posed by Stanley and also proves that uniform matroids are Ehrhart positive, an important and yet unsolved particular case of a conjecture posed by De Loera et al. To this end, we introduce a new family of numbers that we call \textit{weighted Lah numbers} and study some of their properties.\\
	
	\smallskip
    \noindent {\scshape Keywords.} Hypersimplices, Ehrhart polynomials, Polytopes, Uniform Matroids.
\end{abstract}

\maketitle

\section{Introduction} 

Let us fix two positive integers $n$ and $k$ with $k\leq n$. The $(k,n)$-hypersimplex, denoted by $\Delta_{k,n}$ is defined by:
    \[ \Delta_{k,n} := \left\{x\in [0,1]^n : \sum_{i=1}^n x_i = k\right\}.\]

This polytope appears naturally in several contexts within geometric and algebraic combinatorics. For example, it can be seen as a weight polytope of the fundamental representation of the general linear groups $\operatorname{GL}(n)$ or as the basis polytope of the uniform matroid $U_{k,n}$.\\

The basis polytope (also known as the \textit{matroid polytope}) of a matroid is defined as the convex hull of the indicator functions of its bases \cite{GGMS}, \cite{welsh}. It encodes all the information about the matroid, hence providing a geometric point of view on matroidal notions and problems. These polytopes are relevant objects in algebraic combinatorics, toric geometry, combinatorial optimization, and as of today a matter of extensive research \cite{ardila}, \cite{fink}, \cite{feichtner}, \cite{deloera}, \cite{knauer2}.
The uniform matroid $U_{k,n}$ is in particular defined as the matroid on the set $\{1,2,\ldots,n\}$ having set of bases $\mathscr{B}:= \{ B\subseteq \{1,\ldots,n\} : |B|=k\}$ and, as we said above, its basis polytope coincides with the hypersimplex $\Delta_{k,n}$.\\

An important invariant of a polytope $\mathscr{P}$ whose vertices lie in $\mathbb{Z}^d$, is the so-called \textit{Ehrhart polynomial} \cite{ehrhart}, \cite{beck}. It is defined as the polynomial $p\in \mathbb{Q}[t]$ such that $p(t) = |\mathbb{Z}^d\cap t\mathscr{P}|$ for $t\in\mathbb{Z}_{\geq 0}$, being $t\mathscr{P}$ the \textit{dilation} of $\mathscr{P}$ with respect to the origin by the factor $t$. In particular, since the basis polytope of a matroid has vertices with $0/1$ coordinates, we can consider its Ehrhart polynomial, which we will sometimes refer as the Ehrhart Polynomial of the matroid itself.\\

In the paper \cite[Conjecture 2(B)]{deloera}, the authors posed the following conjecture:

\begin{conj}[De Loera et al]\label{conje}
	Let $\mathscr{P}(M)$ be the basis polytope of a matroid $M$. Then the coefficients of the Ehrhart polynomial of $\mathscr{P}(M)$ are positive.
\end{conj}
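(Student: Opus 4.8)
The plan is to convert the conjecture into a single positivity statement about a universal valuation, via McMullen's local formula for Ehrhart coefficients, rather than via subdivisions. First I would dispose of the disconnected case: if $M = M_1 \oplus M_2$ then the basis polytope factors as $\mathscr{P}(M) = \mathscr{P}(M_1) \times \mathscr{P}(M_2)$, so its Ehrhart polynomial is the product of the two factors' Ehrhart polynomials; since a product of polynomials with positive coefficients again has positive coefficients, it suffices to prove the conjecture for connected matroids. I deliberately avoid the other natural reduction, namely expressing $\mathscr{P}(M)$ through a matroid subdivision or through the Derksen--Fink decomposition into Schubert matroids: both express the Ehrhart polynomial as a \emph{signed} combination (inclusion--exclusion in the first case, a $\Z$-linear valuative identity in the second), and signed combinations destroy coefficientwise positivity.

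The core of the approach rests instead on McMullen's formula, in the explicit form given by Berline and Vergne: there is a single $\mathrm{GL}_n(\Z)$-invariant valuation $\alpha$ on rational cones such that, for every lattice polytope $\mathscr{P}$,
\[ [t^i]\, L_{\mathscr{P}}(t) \;=\; \sum_{\dim F = i} \alpha\bigl(\mathrm{ncone}(F,\mathscr{P})\bigr)\, \mathrm{nvol}(F), \]
the sum running over $i$-dimensional faces $F$, with $\mathrm{nvol}(F)\geq 0$ the normalized relative volume. Because every volume is nonnegative and the faces are combined with \emph{no} signs, it would suffice to show that $\alpha$ takes positive values on every normal cone appearing in a matroid basis polytope. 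Two structural facts feed into this. First, every face $F$ of $\mathscr{P}(M)$ is itself (a direct sum of) basis polytopes of minors of $M$, so the family of cones to control is intrinsically matroidal. Second, $\mathscr{P}(M)$ is a generalized permutohedron, so all its normal cones are unions of chambers of the braid arrangement; the problem thereby reduces to evaluating $\alpha$ on cones built from the root system $A_{n-1}$.

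The main obstacle, and the heart of the matter, is the positivity of $\alpha$ on precisely these braid-arrangement cones. The Berline--Vergne valuation is defined by a recursive averaging over flags of coordinate subspaces and is not given by any manifestly positive expression; in fact $\alpha$ is known to take negative values on general rational cones, so everything hinges on showing that the cones arising from faces of matroid polytopes avoid that regime. I would attempt to compute $\alpha$ on these cones combinatorially, indexing them by the lattice of flats (equivalently by the face poset of the permutohedron) and searching for a subtraction-free formula, using the author's positivity result for $U_{k,n}$ as the maximally symmetric base case and trying to bootstrap to all generalized-permutohedral cones through the deletion/contraction structure that governs the faces. I expect this positivity of $\alpha$ to be the genuinely hard and possibly fragile point: if $\alpha$ fails to be positive on even one such cone, the strategy collapses, and a counterexample to the full conjecture would have to be sought among matroids whose polytopes realize that cone as a face.
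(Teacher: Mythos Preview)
The statement you are attempting is Conjecture~\ref{conje}, which the paper does \emph{not} prove; it is stated there as an open conjecture. The paper's actual contribution is Theorem~\ref{main}, the special case $M = U_{k,n}$, and the method is entirely different from yours: Katzman's explicit formula for $E_{k,n}(t)$ is manipulated via generating functions, and the coefficient of $t^m$ is shown to equal the manifestly positive sum $\tfrac{1}{(n-1)!}\sum_{\ell=0}^{k-1} W(\ell,n,m+1)\, A(m,k-\ell-1)$ in terms of the newly introduced weighted Lah numbers and Eulerian numbers. No normal cones, valuations, or polytope geometry beyond the initial lattice-point count appear anywhere in that argument.

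Your proposal is not a proof but a strategy, and you correctly flag the gap yourself: positivity of the Berline--Vergne valuation $\alpha$ on the cones arising from faces of matroid polytopes is the entire content of the problem in this formulation, and you offer no argument for it. This is precisely the approach of Castillo and Liu, cited in the paper as the source of the strengthened generalized-permutohedron conjecture, so the plan is not new and its hard step was already the recognized open problem. More decisively, the full Conjecture~\ref{conje} is now known to be \emph{false}: the same author later constructed connected matroids whose basis polytopes have negative Ehrhart coefficients, so no strategy can succeed in general. Your final sentence is prescient --- $\alpha$-positivity does fail on certain braid-arrangement cones, and that is exactly where the counterexamples live.
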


Also in \cite[Lemma 29]{deloera} the authors proved that hypersimplices $\Delta_{2,n}$ have an Ehrhart polynomial with positive coefficients. Their proof is based on inequalities, using a result of Katzman \cite{katzman} regarding a formula for the Ehrhart polynomial of hypersimplices, found in the context of algebras of Veronese type. Also, as a corollary of a result in \cite{ohsugi} regarding the complex roots of the Ehrhart polynomial of $\Delta_{3,n}$ one can also conclude the positivity of the Ehrhart coefficients for this particular case.\\

We will restate Katzman's formula in the following section, prove it using generating functions and exploit it to prove the main result of this article.

\begin{teo}\label{main}
	The coefficients of the Ehrhart polynomial of all hypersimplices $\Delta_{k,n}$ are positive.
\end{teo}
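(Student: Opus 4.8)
The plan is to start from Katzman's formula for the Ehrhart polynomial, which (as will be proved in the next section by generating functions) reads
\[
  L_{k,n}(t)\;=\;\sum_{j=0}^{k-1}(-1)^{j}\binom{n}{j}\binom{(k-j)t+n-1-j}{n-1}.
\]
Replacing $k$ by $n-k$ if necessary — which is harmless, since $\Delta_{k,n}$ and $\Delta_{n-k,n}$ are unimodularly equivalent — I may assume $2k\le n$, so that every integer $k-j$ with $0\le j\le k-1$ appearing above is positive. The whole difficulty is the alternating sign; the idea is to extract the coefficient of a fixed power of $t$ and show that the resulting signed sum collapses to something combinatorially positive.

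Concretely, I would expand each summand $\binom{(k-j)t+n-1-j}{n-1}$ as a polynomial in $t$: its coefficients are, up to the global factor $1/(n-1)!$ and a power of $k-j$, elementary symmetric functions of the consecutive integers $1-j,\dots,n-1-j$, equivalently (signed) Stirling numbers of the first kind. Substituting and collecting, $(n-1)!$ times the coefficient of $t^{i}$ becomes a double sum in which the $j$-dependence and the Stirling-index dependence separate, and it is cleanest to run this for the polynomial $L_{k,n}(t-1)$, whose $i$-th coefficient I expect to factor as a product of two such signed sums. This is where the \emph{weighted Lah numbers} enter: for a weight sequence $w=(w_{1},w_{2},\dots)$ one counts ordered set partitions of $\{1,\dots,n\}$ with each block $B$ contributing a factor $w_{|B|}$ (the classical Lah numbers being the case $w_{s}=s!$), and the assertion is that, for the weight sequence dictated by $k$, the signed sums just obtained are precisely such weighted Lah numbers. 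Since weighted Lah numbers attached to a nonnegative weight sequence are nonnegative, this gives a combinatorial formula for each coefficient of $L_{k,n}(t-1)$ exhibiting it as $\ge 0$; and because the leading coefficient of $L_{k,n}$ is the normalized volume of $\Delta_{k,n}$, namely the Eulerian number $A(n-1,k-1)>0$, and $L_{k,n}(t)$ is obtained from $L_{k,n}(t-1)$ by the substitution $t\mapsto t+1$, every coefficient of $L_{k,n}(t)$ comes out strictly positive.

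The crux — and the step I expect to be hardest — is the identification of the alternating sums in $j$ with weighted Lah numbers, i.e.\ proving that the signs really do cancel. This is not formal: it requires setting up the basic theory of weighted Lah numbers (a generating function, a Pascal-type recurrence, the combinatorial model above) and, above all, verifying that the specific weight sequence imposed by the hypersimplex has nonnegative — in fact positive — entries. The remaining ingredients (reducing to Katzman's formula, the bookkeeping with Stirling numbers, and extracting strict positivity from the Eulerian leading term) should be comparatively routine.
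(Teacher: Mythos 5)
Your starting point (Katzman's formula, expanding $\binom{(k-j)t+n-1-j}{n-1}$ through elementary symmetric functions of consecutive integers, i.e.\ Stirling numbers of the first kind) is the same as the paper's, and your closing step is valid: if all coefficients of $E_{k,n}(t-1)$ are nonnegative and the leading one is the Eulerian number $A(n-1,k-1)>0$, then substituting $t\mapsto t+1$ does make every coefficient of $E_{k,n}(t)$ strictly positive. The shift even produces the factorization you anticipate, since $(k-j)(t-1)+n-1-j=(k-j)t+n-1-k$ gives
\[ (n-1)!\,[t^m]E_{k,n}(t-1)\;=\;e_{n-1-m}(1-k,2-k,\dots,n-1-k)\cdot\sum_{j=0}^{k-1}(-1)^j\binom{n}{j}(k-j)^m, \]
with $e_r$ the elementary symmetric polynomial. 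But this is exactly where the proposal stops being a proof. Both factors can be negative (for $n=4$, $k=2$, $m=1$ they equal $-1$ and $-2$), so neither one is a ``weighted Lah number attached to a nonnegative weight sequence'' in your sense; what you would actually have to show is that these two separately alternating quantities always carry the same sign, and nothing in your outline addresses that. Worse, your notion of weighted Lah number --- ordered set partitions with each block $B$ contributing a factor $w_{|B|}$ --- cannot capture the answer: the statistic that works in the paper is defined on \emph{linearly ordered} blocks (the number of elements of a block smaller than its first element, summed over blocks) and it cuts across block-size classes, refining $L(n,m+1)$ into a two-parameter family $W(\ell,n,m+1)$ indexed by an extra weight $\ell$. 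A weight depending only on block sizes is the wrong object.

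The idea you are missing is how the factor $(k-j)^m$ gets absorbed. The paper does not shift $t$ at all; it writes $e_{k,n,m}=\frac{1}{(n-1)!}[x^k]\,F_{n,m}(x)G_m(x)$, where $F_{n,m}(x)=\sum_j(-1)^j\binom{n}{j}f_{j,n,m}x^j$ collects the Stirling data and $G_m(x)=\sum_{j\ge 0}j^mx^j=(1-x)^{-(m+1)}\sum_{i}A(m,i)x^{i+1}$ is the Eulerian generating function, and then proves --- via a bivariate generating function for the $W(\ell,n,m+1)$ --- that $(1-x)^{-(m+1)}F_{n,m}(x)=\sum_\ell W(\ell,n,m+1)x^\ell$ has nonnegative coefficients. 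The resulting identity $e_{k,n,m}=\frac{1}{(n-1)!}\sum_\ell W(\ell,n,m+1)A(m,k-\ell-1)$ is a convolution with Eulerian numbers, not a single weighted-Lah evaluation, and it is precisely this convolution structure, absent from your plan, that converts the alternating sum into a manifestly positive one. (A minor point: restricting to $2k\le n$ buys you nothing for the stated purpose, since $k-j\ge 1$ for all $0\le j\le k-1$ in any case.)
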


Moreover, we are going to introduce the notion of \textit{weighted Lah number} and to provide a combinatorial formula for each coefficient in terms of them. We thus settle an open problem posed in Richard Stanley's book \cite[Ch. 4, Problem 62e]{stanley}. 

It is worth noting that according to \cite{stanleyeulerian} the calculation of the principal coefficient of these polynomials (it is, the normalized volume of the hypersimplex) dates back to Laplace, though apparently he did not do it explicitly. The principal coefficients are what in the literature are called Eulerian numbers and several combinatorial interpretations exist for them \cite{stanley,knuth}. Thanks to some properties of the Ehrhart polynomials and the fact that the facets of a hypersimplex are also hypersimplices, there was also a combinatorial interpretation of the second highest coefficient in terms of Eulerian numbers. However the rest of them remained elusive. More recently, independent proofs of the positivity of the linear coefficient of the Ehrhart polynomial of all hypersimplices were found in \cite{liutodd} and \cite{jochemko2019generalized}.\\

In the paper \cite{fuliu} the Conjecture \ref{conje} has been strengthened and reformulated in a more general setting, asserting that indeed all \textit{generalized permutohedra} \cite{postnikov} are Ehrhart positive. Since it is known \cite{ardila} that this family contains all matroid polytopes, this conjecture is indeed stronger. Also, in \cite{liu} there is a survey on the families of polytopes that are known to be Ehrhart positive, and those that are conjectured to also have this property. \\

The Ehrhart $h^*$-polynomial \cite{beck,stanleyhstar} of hypersimplices is itself a rich object of study. For example, the unimodality of its coefficients is still an open problem \cite{braununimodality}. Recently the author conjectured that the $h^*$-polynomial of all matroid polytopes (in particular of hypersimplices) are real-rooted \cite{ferroni2}. Some combinatorial interpretations for the coefficients of the $h^*$-polynomials of hypersimplices were found in \cite{nanli}, \cite{early2017conjectures} and \cite{kim}.

\section{The Ehrhart Polynomial of Hypersimplices}

We will base our computations on a formula found by Katzman in \cite[Corollary 2.2]{katzman} for $E_{k,n}$. We will provide a generating-function based proof. 

\begin{teo}\label{katz}
	The Ehrhart Polynomial $E_{k,n}(t)$ of the hypersimplex $\Delta_{k,n}$ is given by:
		\begin{equation}\label{formula} 
			E_{k,n}(t) = \sum_{j=0}^{k-1} (-1)^j \binom{n}{j} \binom{(k-j)t+n-1-j}{n-1}.
		\end{equation}
\end{teo}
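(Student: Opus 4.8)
The plan is to compute the Ehrhart polynomial $E_{k,n}(t)$ by counting lattice points in the dilate $t\Delta_{k,n}$ directly, and to organize this count via generating functions so that the alternating sum in \eqref{formula} emerges from an inclusion–exclusion. First I would observe that a lattice point of $t\Delta_{k,n}$ is a tuple $(x_1,\dots,x_n)\in\Z^n$ with $0\le x_i\le t$ for all $i$ and $\sum_{i=1}^n x_i = kt$. So $E_{k,n}(t)$ is the number of such constrained compositions, i.e.\ the coefficient of $z^{kt}$ in the product $\prod_{i=1}^n (1 + z + \cdots + z^t) = \left(\frac{1-z^{t+1}}{1-z}\right)^n$.

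Next I would extract this coefficient. Writing $(1-z^{t+1})^n = \sum_{j\ge 0} (-1)^j \binom{n}{j} z^{(t+1)j}$ and $(1-z)^{-n} = \sum_{m\ge 0}\binom{n-1+m}{n-1} z^m$, the coefficient of $z^{kt}$ in the product is
\begin{equation*}
E_{k,n}(t) = \sum_{j\ge 0} (-1)^j \binom{n}{j} \binom{n-1+kt-(t+1)j}{n-1} = \sum_{j\ge 0} (-1)^j \binom{n}{j}\binom{(k-j)t + n-1-j}{n-1}.
\end{equation*}
It then remains to check that the terms with $j\ge k$ contribute nothing, so that the sum truncates at $j=k-1$ as claimed. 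For $j=k$ the upper index is $n-1-k$, which is either negative (if $k>n-1$, i.e.\ $k=n$, a degenerate case) or strictly less than $n-1$, and here I must be slightly careful: $\binom{(k-j)t+n-1-j}{n-1}$ is being read as a polynomial in $t$ (the generalized binomial coefficient $\binom{x}{n-1} = \frac{x(x-1)\cdots(x-n+2)}{(n-1)!}$), not as a count of lattice points. For $j>k$ the top index $(k-j)t+n-1-j$ is a linear polynomial in $t$ with negative leading coefficient, and for $k\le j\le n$ one checks that for every integer $t\ge 0$ it lies in the range $\{0,1,\dots,n-2\}$ or is negative — in all cases one of the factors $x, x-1,\dots, x-(n-2)$ vanishes or the usual convention makes $\binom{x}{n-1}=0$ — hence these generalized binomials evaluate to $0$ as polynomials (a polynomial vanishing at infinitely many points is identically zero). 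This bookkeeping is the one genuinely delicate point: I need to be sure the passage from "coefficient of $z^{kt}$" (valid for $t\in\Z_{\ge 0}$) to "identity of polynomials in $t$" is justified, which follows because both sides agree at all nonnegative integers and both are polynomials.

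The main obstacle I anticipate is precisely this truncation/convention argument rather than the generating-function manipulation, which is routine. In particular I would want to verify the edge cases $k=1$ (where $\Delta_{1,n}$ is a simplex and the formula should reduce to $\binom{t+n-1}{n-1}$) and $k=n$ separately, and confirm that the symmetry $E_{k,n}(t)=E_{n-k,n}(t)$ coming from $x_i\mapsto t-x_i$ is consistent with — though not obviously manifest in — the one-sided sum \eqref{formula}. Once the vanishing of the tail terms is established, the theorem follows immediately from the coefficient extraction above.
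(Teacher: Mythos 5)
Your approach is the same as the paper's: identify $E_{k,n}(t)$ with the coefficient of $z^{kt}$ in $\bigl(\frac{1-z^{t+1}}{1-z}\bigr)^n$ and extract it by convolution. The one step you flag as delicate is, however, the one you get wrong. For $j>k$ the generalized binomial $\binom{(k-j)t+n-1-j}{n-1}$, read as a polynomial in $t$, is \emph{not} identically zero: it is a polynomial of degree $n-1$ with leading coefficient $\frac{(k-j)^{n-1}}{(n-1)!}\neq 0$. Your claim that it vanishes at every integer $t\ge 0$ rests on the assertion that $\binom{x}{n-1}=0$ when $x$ is a negative integer, but that is only true for the combinatorial convention, not for the polynomial $\frac{x(x-1)\cdots(x-n+2)}{(n-1)!}$ you explicitly say you are using. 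Concretely, with $n=3$, $k=1$, $j=2$ the top index is $-t$ and $\binom{-t}{2}=\frac{t(t+1)}{2}$, which equals $1$ already at $t=1$. So the parenthetical ``a polynomial vanishing at infinitely many points is identically zero'' is being applied to polynomials that do not in fact vanish at infinitely many points, and taken literally your argument would prove a nonzero polynomial is zero.

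The fix is easy and is essentially what the paper does: the terms with $j\ge k$ never enter the convolution at all, because the exponent $kt-(t+1)j$ that would be needed from $\frac{1}{(1-z)^n}=\sum_{m\ge 0}\binom{n-1+m}{n-1}z^m$ is negative whenever $(t+1)j>kt$, which holds for all $j\ge k$ and $t\ge 0$. So there is nothing to ``check vanishes'' for $j\ge k$; those terms are simply absent. What does require the check you correctly anticipate is the range $0\le j\le k-1$: there the top index $(k-j)t+n-1-j$ is always a nonnegative integer (it is at least $n-1-j\ge n-k\ge 0$), so the combinatorial and polynomial readings of the binomial coefficient agree at every $t\ge 0$ --- including the small values of $t$ for which $(t+1)j>kt$ and the term is genuinely $0$ because the top index lies in $\{0,\dots,n-2\}$. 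With that, both sides of \eqref{formula} agree at all nonnegative integers and are polynomials in $t$, which completes the argument. The rest of your proposal (the reduction to counting, the edge cases, the symmetry check) is fine and matches the paper's Lemma~\ref{formulita}.
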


It is not at all apparent from this formula that the coefficients of the polynomial $E_{k,n}$ are positive. Indeed the alternating factor $(-1)^j$ and the fact that the variable $t$ appears inside a binomial coefficient which in turn for $j>1$ is a polynomial with some negative coefficients do not permit us to see this fact directly. Before proving Theorem \ref{katz} we establish a useful Lemma.

\begin{lema}\label{formulita}
	If $1\leq k\leq n-1$ and $t\geq 0$, then the coefficient of $x^{kt}$ in the polynomial $(1+x+x^2+\ldots+x^t)^n$ is exactly $E_{k,n}(t)$.
\end{lema}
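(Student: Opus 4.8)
The plan is to observe that both quantities count the lattice points of a dilate of $\Delta_{k,n}$, and then evaluate the generating-function side by hand. The vertices of $\Delta_{k,n}$ are precisely the indicator vectors $\sum_{i\in B} e_i$ of the $k$-element subsets $B\subseteq\{1,\ldots,n\}$, which all lie in $\mathbb{Z}^n$; hence $\Delta_{k,n}$ is a lattice polytope and, by definition of the Ehrhart polynomial, $E_{k,n}(t) = |\mathbb{Z}^n\cap t\Delta_{k,n}|$ for every integer $t\geq 0$. So the first step is simply to make $t\Delta_{k,n}$ explicit: scaling the defining conditions $x\in[0,1]^n$, $\sum_i x_i=k$ by the factor $t$ yields
\[ t\Delta_{k,n} = \left\{x\in[0,t]^n : \sum_{i=1}^n x_i = kt\right\}, \]
so that $\mathbb{Z}^n\cap t\Delta_{k,n}$ is exactly the set of integer tuples $(a_1,\ldots,a_n)$ with $0\leq a_i\leq t$ for all $i$ and $a_1+\cdots+a_n = kt$.

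The second step is the elementary generating-function identity: expanding the product and grouping terms by total degree,
\[ \left(1+x+x^2+\cdots+x^t\right)^n = \sum_{(a_1,\ldots,a_n)\in\{0,1,\ldots,t\}^n} x^{a_1+a_2+\cdots+a_n}, \]
so the coefficient of $x^{kt}$ on the right-hand side counts precisely the tuples $(a_1,\ldots,a_n)\in\{0,\ldots,t\}^n$ with $a_1+\cdots+a_n=kt$. By the description of the lattice points in the previous step, this number is $|\mathbb{Z}^n\cap t\Delta_{k,n}| = E_{k,n}(t)$, which is the assertion.

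I do not expect a genuine obstacle here; the argument is a direct unpacking of the definition of the Ehrhart polynomial together with one line of generating-function bookkeeping. The only points meriting (minor) care are checking that the dilate $t\Delta_{k,n}$ really is the box-constrained slice written above, and noting that the hypothesis $1\leq k\leq n-1$ keeps us in the range where $\Delta_{k,n}$ is a genuine, nondegenerate lattice polytope so that the appeal to Ehrhart theory is legitimate (for the degenerate cases $k=0$ and $k=n$ both sides are identically $1$, so nothing is lost). It is worth flagging that this lemma is exactly the launching point for the generating-function proof of Theorem~\ref{katz}: writing $1+x+\cdots+x^t=\frac{1-x^{t+1}}{1-x}$, expanding $(1-x^{t+1})^n$ by the binomial theorem and $(1-x)^{-n}$ as a series of binomial coefficients, and then extracting the coefficient of $x^{kt}$, will produce Formula~\eqref{formula}.
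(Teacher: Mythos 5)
Your proof is correct and follows exactly the same route as the paper's: identify $\mathbb{Z}^n\cap t\Delta_{k,n}$ with the tuples in $\{0,\ldots,t\}^n$ summing to $kt$, and recognize that the coefficient of $x^{kt}$ in $(1+x+\cdots+x^t)^n$ counts precisely those tuples. The extra remarks about the degenerate cases and the forward reference to Theorem~\ref{katz} are fine but not needed.
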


\begin{proof}
    By definition, the polynomial $E_{k,n}(t)$ counts the number of elements in the set $t\Delta_{k,n} \cap \mathbb{Z}^n $. This set can be rewritten as:
        \[\left \{ y \in \{0,1,\ldots,t\}^n : \sum_{i=1}^n y_i = kt\right\}.\]
    But notice that the coefficient of $x^{kt}$ in the product
        \[(1+x+x^2+\ldots+x^t)^n= \underbrace{(1+x+x^2+\ldots+x^t) \cdot \ldots \cdot (1+x+x^2+\ldots+x^t)}_{n \text{ times}} \]
    is exactly the number of ways of choosing a sequence of $n$ elements in the set $\{0,1,\ldots,t\}$ in such a way that their sum is exactly $kt$. That is exactly the cardinality of our set.
\end{proof}

Recall that if one has a (formal) power series $f(x):=\sum_{j=0}^{\infty} a_j x^j$, it is customary to use the notation $[x^\ell]f(x):=a_{\ell}$.

\begin{proof}[Proof of Theorem \ref{katz}]
	We will use generating functions to compute the coefficient of $x^{kt}$ in $(1+x+\ldots+x^t)^n$ and then we will use the preceding Lemma. Notice that:
		\begin{align*} 
			[x^{kt}] \left(1+x+\ldots+x^t\right)^n &= [x^{kt}]\left(\frac{1-x^{t+1}}{1-x}\right)^n\\
			&= [x^{kt}] \left((1-x^{t+1})^n \cdot \frac{1}{(1-x)^n}\right)
		\end{align*}
	So writing $(1-x^{t+1})^n = \sum_{j=0}^n (-1)^j \binom{n}{j} x^{(t+1)j}$ and $\frac{1}{(1-x)^n} = \sum_{j=0}^{\infty} \binom{n-1+j}{n-1} x^j$, the coefficient of $x^{kt}$ in this product can be computed as a convolution:
		\[ \sum_{j=0}^{k-1} (-1)^j \binom{n}{j} \binom{n-1+(k-j)t-j}{n-1},\]
		where the sum ends in $k-1$ since in the first of our two formal series we have $x^{(t+1)j}$ and we are computing the coefficient of $x^{kt}$. Also, the second binomial coefficient in our expression comes from the fact that $(t+1)j + ((k-j)t-j)=kt$. 
\end{proof}

\section{Weighted Lah Numbers}

In this section we develop some useful tools to prove Theorem \ref{main}. We recall the definition of \textit{Lah numbers} (also known as \textit{Stirling Numbers of the 3rd kind}). 

\begin{defi}
	The \textit{Lah number} $L(n,m)$ is defined as the number of ways of partitioning the set $\{1,2,\ldots,n\}$ in exactly $m$ linearly ordered blocks. We will denote the set of all such partitions by $\mathscr{L}(n,m)$.
\end{defi}

\begin{ej}
	$L(3,2)=6$ because we have the following possible partitions:
		\[ \{(1,2),(3)\}, \{(2,1),(3)\},\]
		\[ \{(1,3),(2)\}, \{(3,1),(2)\},\]
		\[ \{(2,3),(1)\}, \{(3,2),(1)\}.\]
\end{ej}

If $\pi$ is a partition of $\{1,\ldots,n\}$ in $m$ linearly ordered blocks, for any of these blocks $b$, we will write $b\in \pi$. So, for example $(2,3)\in \{(2,3),(1)\}$. Also, we will use the notation $|b|$ to denote the number of elements in $b$.

\begin{obs}
	We have the equality $L(n,m)=\frac{n!}{m!}\binom{n-1}{m-1}$. This can be proven easily by a combinatorial argument as follows. Order the $n$ numbers on the set in any fashion. To get the partition we can put $m-1$ divisions in any of the $n-1$ spaces between two consecutive numbers. Then divide by $m!$, the number of ways of ordering all the blocks.
\end{obs}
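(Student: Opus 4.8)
The plan is to prove the identity by a double-counting argument, counting \emph{ordered} families of linearly ordered blocks in two ways. Introduce the set $\widetilde{\mathscr{L}}(n,m)$ of all ways of partitioning $\{1,\dots,n\}$ into $m$ nonempty linearly ordered blocks \emph{together with} a linear order on the $m$ blocks themselves. On one hand, every element of $\mathscr{L}(n,m)$ gives rise to exactly $m!$ elements of $\widetilde{\mathscr{L}}(n,m)$, one for each way of ordering the $m$ blocks, and this correspondence is surjective and $m!$-to-one; hence $|\widetilde{\mathscr{L}}(n,m)| = m!\,L(n,m)$.

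On the other hand, I would set up a bijection between $\widetilde{\mathscr{L}}(n,m)$ and the set of pairs $(\sigma, S)$, where $\sigma$ is a permutation of $\{1,\dots,n\}$ written as a word $\sigma_1\sigma_2\cdots\sigma_n$ and $S$ is an $(m-1)$-element subset of the $n-1$ ``gaps'' between consecutive letters. Given such a pair, cutting the word $\sigma$ at the positions prescribed by $S$ produces $m$ nonempty words read left to right; declaring the $i$-th of these (in left-to-right order) to be the $i$-th linearly ordered block yields an element of $\widetilde{\mathscr{L}}(n,m)$. Conversely, from an ordered family of linearly ordered blocks one recovers $\sigma$ by concatenating the blocks in their given order (each block internally in its own order) and recovers $S$ as the set of gaps falling between two consecutive blocks; these two constructions are mutually inverse. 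Thus $|\widetilde{\mathscr{L}}(n,m)| = n!\binom{n-1}{m-1}$, and comparing the two counts gives $m!\,L(n,m) = n!\binom{n-1}{m-1}$, i.e.\ the claimed formula.

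The only point requiring a little care is checking that the second correspondence is genuinely a bijection: one must verify that the $m$ parts obtained after cutting are always nonempty, which is forced because the cut positions are distinct interior gaps, and that no information is lost or created, i.e.\ that distinct pairs $(\sigma,S)$ produce distinct ordered families and vice versa. This is routine, but it is the step where the hypothesis that the blocks are nonempty is used in an essential way. An alternative, purely algebraic route would proceed via exponential generating functions: the EGF of a single nonempty linearly ordered block is $\sum_{k\geq 1} k!\,\frac{x^k}{k!} = \frac{x}{1-x}$, so $\sum_{n} L(n,m)\frac{x^n}{n!} = \frac{1}{m!}\left(\frac{x}{1-x}\right)^m$, and extracting the coefficient of $x^n$ via $[x^{n-m}](1-x)^{-m} = \binom{n-1}{m-1}$ recovers the identity; I would keep the combinatorial proof as the main argument, since it matches the spirit of this section.
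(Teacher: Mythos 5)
Your proof is correct and is essentially the paper's own argument — counting ordered sequences of linearly ordered blocks as words of a permutation cut at $m-1$ of the $n-1$ gaps, then dividing by $m!$ — just written out more carefully as an explicit bijection. The extra EGF remark is a nice cross-check but not a different proof in substance.
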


There already exist some generalizations of these numbers \cite{genlah}. We will introduce a new one that we will call \textit{weighted Lah numbers}.

\begin{defi}
	Let $\pi$ be a partition of the set $\{1,\ldots,n\}$ into $m$ linearly ordered blocks. We define the \textit{weight of $\pi$} by the following formula:
		\[ w(\pi) := \sum_{b\in\pi} w(b),\]
	where $w(b)$ is the number of elements in $b$ that are smaller (as positive integers) than the first element in $b$.
\end{defi}

\begin{ej}\label{ejemplito}
	Among the $6$ partitions that we have seen that exist of $\{1,2,3\}$ into $2$ blocks, we have:
		\[ w(\{(1,2),(3)\}) = 0 + 0 = 0,\;\; w(\{(2,1),(3)\}) = 1 + 0 = 1,\]
		\[ w(\{(1,3),(2)\}) = 0 + 0 = 0,\;\;  w(\{(3,1),(2)\}) = 1 + 0 = 1,\]
		\[ w(\{(2,3),(1)\}) = 0 + 0 = 0,\;\;  w(\{(3,2),(1)\}) = 1 + 0 = 1.\]
	Note that there are exactly $3$ of these partitions of weight $0$ and exactly $3$ of weight $1$.
\end{ej}

\begin{defi}
	We define the \textit{weighted Lah Numbers} $W(\ell,n,m)$ as the number of partitions of weight $\ell$ of $\{1,\ldots,n\}$ into exactly $m$ linearly ordered blocks. 
\end{defi}

\begin{ej}
	Rephrasing the conclusion of the Example \ref{ejemplito}, we have that $W(0,3,2)=3$ and $W(1,3,2)=3$.
\end{ej}

\begin{table}
	\parbox{.45\linewidth}{\begin{tabular}{>{$}l<{$}|*{5}{c}}
		\multicolumn{1}{l}{$m$} &&&&&\\\cline{1-1} 
		1 &24&24&24&24&24\\
		2 &50&70&70&50\\
		3 &35&50&35&&\\
		4 &10&10&&&\\
		5 &1&&&&\\\hline
		\multicolumn{1}{l}{} &0&1&2&3&4\\\cline{2-6}
		\multicolumn{1}{l}{} &\multicolumn{5}{c}{$\ell$}
	\end{tabular}
	\caption{$W(\ell,5,m)$}}
	\quad\quad\quad
	\parbox{.45\linewidth}{\begin{tabular}{>{$}l<{$}|*{6}{c}}
		\multicolumn{1}{l}{$m$} &&&&&&\\\cline{1-1} 
		1 &120&120&120&120&120&120\\
		2 &274&404&444&404&274\\
		3 &225&375&375&225&&\\
		4 &85&130&85&&&\\
		5 &15&15&&&&\\
		6 &1&&&&&\\\hline
		\multicolumn{1}{l}{} &0&1&2&3&4&5\\\cline{2-7}
		\multicolumn{1}{l}{} &\multicolumn{6}{c}{$\ell$}
	\end{tabular}
	\caption{$W(\ell,6,m)$}}
\end{table}

The set of all partitions of $\{1,\ldots,n\}$ into $m$ linearly ordered blocks and weight $\ell$ will be denoted by $\mathscr{W}(\ell,n,m)$.

\begin{obs}
	Observe that $W(\ell,n,m)\neq 0$ only for $0\leq \ell \leq n-m$. This is because the maximum weight can be obtained by ordering every block in such a way that its maximum element is on the first position. Also, we have the following:
		\[ W(0,n,m) = {n \brack m}\]
	where the brackets denote the \textit{(unsigned) Stirling numbers of the first kind} \cite{knuth}. This can be proven combinatorially by noticing that for every permutation with exactly $m$ cycles, we can present it in a unique way as a partition of $\{1,\ldots,n\}$ into $m$ linearly ordered blocks having every block its minimum element in the first position.
\end{obs}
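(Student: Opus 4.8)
The statement has two parts, and I would dispatch them in turn.

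\emph{The range $0\leq\ell\leq n-m$.} I would argue block by block. Let $\pi\in\mathscr{W}(\ell,n,m)$ have blocks $b_1,\dots,b_m$, and consider a single linearly ordered block $b=(c_1,c_2,\dots,c_s)$. By definition $w(b)$ is the number of indices $i\geq 2$ with $c_i<c_1$, so $0\leq w(b)\leq s-1=|b|-1$; the left equality holds exactly when $c_1=\min b$ and the right one exactly when $c_1=\max b$. Summing over all $m$ blocks and using $\sum_{b\in\pi}|b|=n$ gives $0\leq w(\pi)\leq\sum_{b\in\pi}(|b|-1)=n-m$. Hence $\mathscr{W}(\ell,n,m)=\emptyset$, i.e.\ $W(\ell,n,m)=0$, unless $0\leq\ell\leq n-m$. (Both endpoints are attained when $1\leq m\leq n$: any partition each of whose blocks begins with its minimum has weight $0$, and listing every block with its maximum first produces weight $n-m$.)

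\emph{The identity $W(0,n,m)={n\brack m}$.} The key observation is that $w(\pi)=0$ precisely when every block of $\pi$ begins with its smallest element. I would then invoke the classical correspondence between such ``min-first'' partitions and permutations. To a permutation $\sigma$ of $\{1,\dots,n\}$ with exactly $m$ cycles, associate the partition obtained by writing each cycle starting from its least element and reading the resulting cyclic word left to right as a linearly ordered block; this lands in $\mathscr{W}(0,n,m)$. Conversely, a partition in $\mathscr{W}(0,n,m)$ determines a permutation by turning each block $(c_1,\dots,c_s)$ into the cycle $c_1\mapsto c_2\mapsto\cdots\mapsto c_s\mapsto c_1$. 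These two assignments are mutually inverse, so $W(0,n,m)=|\mathscr{W}(0,n,m)|$ equals the number of permutations of $\{1,\dots,n\}$ with $m$ cycles, which is ${n\brack m}$ by the definition of the unsigned Stirling numbers of the first kind.

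There is no serious obstacle here; the Remark is essentially a bookkeeping exercise. The only point that needs a moment's care is checking that the cycle/block dictionary is well defined and bijective: one must note that ``start each cycle with its minimum'' yields a canonical representative of the cycle, that the cycles of a permutation are uniquely determined, and hence that distinct permutations produce distinct partitions. This is exactly the standard fundamental bijection, so it goes through without difficulty.
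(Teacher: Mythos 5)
Your proposal is correct and follows essentially the same argument as the paper: the block-by-block bound $0\leq w(b)\leq |b|-1$ (with the maximum attained by placing the largest element first) gives the range $0\leq\ell\leq n-m$, and the standard cycle/block correspondence with each cycle written starting from its minimum gives $W(0,n,m)={n\brack m}$. You merely spell out the bookkeeping in more detail than the Remark does.
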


\begin{obs}
	We have symmetry, namely:
		\[ W(\ell,n,m) = W(n-m-\ell,n,m).\]
	This equality is a consequence of the fact that for $\pi \in \mathscr{W}(\ell,n,m)$ we can associate bijectively an element $\pi'\in \mathscr{W}(n-m-\ell,n,m)$ as follows. In $\pi$ interchange the positions of $1$ and $n$, of $2$ and $n-1$, and so on. What one gets is exactly a partition of weight $n-m-\ell$.
\end{obs}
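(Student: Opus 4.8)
The plan is to establish the symmetry by exhibiting an explicit weight-complementing involution on the set $\mathscr{L}(n,m)$ of partitions of $\{1,\dots,n\}$ into $m$ linearly ordered blocks. First I would introduce the order-reversing bijection $\sigma\colon\{1,\dots,n\}\to\{1,\dots,n\}$ given by $\sigma(i)=n+1-i$, and let it act on a partition $\pi$ by replacing each linearly ordered block $b=(b_1,\dots,b_s)$ with $\sigma(b):=(\sigma(b_1),\dots,\sigma(b_s))$, keeping the internal order of the block and the collection of blocks unchanged. Since $\sigma$ is a bijection of the ground set, $\sigma(\pi)$ is again a partition into $m$ linearly ordered blocks, and applying the construction twice returns $\pi$; hence $\pi\mapsto\sigma(\pi)$ is an involution of $\mathscr{L}(n,m)$, in particular a bijection.

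The heart of the argument is the bookkeeping on the weight, done one block at a time. For a block $b=(b_1,\dots,b_s)$ one has $w(b)=\#\{\,i\ge 2 : b_i<b_1\,\}$, and because $\sigma(b_i)<\sigma(b_1)$ is equivalent to $b_i>b_1$, also $w(\sigma(b))=\#\{\,i\ge 2 : b_i>b_1\,\}$. As the $b_i$ are pairwise distinct, every index $i\ge 2$ contributes to exactly one of the two counts, so $w(b)+w(\sigma(b))=s-1=|b|-1$. Summing over the $m$ blocks of $\pi$ and using $\sum_{b\in\pi}(|b|-1)=n-m$ yields $w(\pi)+w(\sigma(\pi))=n-m$, i.e.\ $w(\sigma(\pi))=n-m-w(\pi)$.

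Combining the two steps, $\sigma$ restricts to a bijection $\mathscr{W}(\ell,n,m)\xrightarrow{\ \sim\ }\mathscr{W}(n-m-\ell,n,m)$, which gives $W(\ell,n,m)=W(n-m-\ell,n,m)$; as a byproduct this also re-derives the range $0\le\ell\le n-m$ recorded just above, since $n-m$ is exactly the center of symmetry. I do not anticipate a genuine obstacle: the only point needing care is that within a block the first element is never counted while each remaining element lies strictly above or strictly below it, so the per-block deficiency under $\sigma$ is $|b|-1$ and not $|b|$; everything else is the routine verification that $\sigma$ preserves the number of blocks and squares to the identity.
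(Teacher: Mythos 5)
Your proof is correct and is essentially the paper's own argument: the map $i\mapsto n+1-i$ applied within each block is exactly the "interchange $1$ and $n$, $2$ and $n-1$, \dots" involution the paper describes, and your per-block count $w(b)+w(\sigma(b))=|b|-1$ is the right justification that the total weight complements to $n-m$.
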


It is possible to obtain many recurrences to compute $W(\ell,n,m)$ recursively. For instance we include the following:

\begin{prop}
	The following recurrence holds for $n,m\geq 2$:
	\[W(\ell,n,m) = (n-1) W(\ell-1,n-1,m) + \sum_{j=0}^{n-1} \binom{n-1}{j} j! W(\ell,n-1-j,m-1).\]
\end{prop}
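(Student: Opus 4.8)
The plan is to prove the recurrence by a bijective (double-counting) argument that stratifies $\mathscr{W}(\ell,n,m)$ according to whether the element $1$ occupies the first position of its block. A preliminary observation I would record is the \emph{order-type invariance} of these numbers: for any finite set $S\subseteq\Z$ with $|S|=p$, the number of partitions of $S$ into $q$ linearly ordered blocks of weight $\ell$ equals $W(\ell,p,q)$, since the weight of a block depends only on the linear order of its elements. This lets me freely relabel ground sets below. Since every partition has $1$ in exactly one block, and $1$ is either the first element of that block or it is not, this gives a genuine dichotomy of $\mathscr{W}(\ell,n,m)$ into two classes.

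For the term $(n-1)W(\ell-1,n-1,m)$, consider the partitions $\pi\in\mathscr{W}(\ell,n,m)$ in which $1$ is \emph{not} the first element of its block $b_1$. Because $1$ is the global minimum it is in particular smaller than the first element of $b_1$, hence it is one of the elements counted by $w(b_1)$; deleting $1$ from $b_1$ leaves the first element of $b_1$ unchanged, so $w(b_1)$ --- and therefore the total weight --- drops by exactly $1$, producing a partition of $\{2,\dots,n\}$ into $m$ ordered blocks of weight $\ell-1$. Conversely, starting from any such partition of $\{2,\dots,n\}$ and inserting $1$ into any one of the $\sum_{b}|b|=n-1$ non-initial slots recreates a $\pi$ of the desired type (the inserted $1$, being below every other element, raises the weight of its block by exactly $1$, and it is not first). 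This class is thus in bijection with the set of pairs consisting of a non-initial slot and a partition of $\{2,\dots,n\}$ into $m$ ordered blocks of weight $\ell-1$, which by order-type invariance has size $(n-1)W(\ell-1,n-1,m)$.

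For the sum $\sum_{j}\binom{n-1}{j}j!\,W(\ell,n-1-j,m-1)$, consider the partitions in which $1$ \emph{is} the first element of its block $b_1$, so $w(b_1)=0$. Writing $b_1=(1,c_2,\dots,c_{j+1})$ with $j=|b_1|-1\in\{0,\dots,n-1\}$, such a $\pi$ is determined by: the value of $j$; the set $\{c_2,\dots,c_{j+1}\}\subseteq\{2,\dots,n\}$, chosen in $\binom{n-1}{j}$ ways; a linear order on it, in $j!$ ways; and an arbitrary partition of the remaining $n-1-j$ elements into $m-1$ ordered blocks of total weight $\ell$ (all of the weight must sit outside $b_1$), contributing $W(\ell,n-1-j,m-1)$ by order-type invariance. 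Summing over $j$ and adding the previous class yields the stated recurrence; the hypothesis $m\ge 2$ guarantees $m-1\ge 1$, and the degenerate instances (for example $j=n-1$, which empties the leftover set, or $\ell=0$, which makes the first class empty) are all consistent with the conventions.

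The only genuinely delicate point --- and the one that dictates the whole argument --- is the decision to stratify by the position of $1$ rather than of $n$: it is precisely because $1$ is the \emph{minimum} that removing it from a block in which it is not first changes the weight by exactly $-1$, whereas removing $n$, or any intermediate element, perturbs the weight by an amount that is not uniformly $\pm 1$. Once that observation is in place, both halves reduce to routine bookkeeping, and I would only need to be mildly careful about the boundary cases mentioned above.
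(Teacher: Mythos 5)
Your proof is correct and follows essentially the same approach as the paper: stratify $\mathscr{W}(\ell,n,m)$ by whether $1$ is the first element of its block, count the non-first case by deletion/insertion of $1$ (weight drops by exactly $1$ since $1$ is the global minimum), and count the first-element case by choosing and ordering the rest of that block. Your explicit justification of the order-type invariance and of why the weight changes by exactly $1$ is a bit more careful than the paper's, but the argument is the same.
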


\begin{proof}
	Every $\pi\in\mathscr{W}(\ell,n,m)$ has the number $1$ inside a block. If this number is \textit{not} the first element of its block, this means that if we remove it from $\pi$ we end up getting an element of $\mathscr{W}(\ell-1,n-1,m)$ (with every element shifted by one). Analogously, we can pick an element of $\mathscr{W}(\ell-1,n-1,m)$ (which we think of as having every element shifted by one) and reconstruct an element of $\mathscr{W}(\ell,n,m)$ by adjoining the element $1$ in such a way that it is not the first element of a block. There are $n-1$ possibilities of where to put the number $1$ to get an element of $\mathscr{W}(\ell,n,m)$. So we get the first summand.
	
	The remaining cases to consider are those on which $1$ is the first element of its block. In this case we choose $j$ elements to be in this block, and in every possible order of these elements, the block will always have weight $0$. So the remaining $n-j-1$ elements will have to be arranged in $m-1$ blocks of total weight $\ell$.
\end{proof}

\begin{obs}
	The last proposition tells us that if we make the subtraction $W(\ell,n,m)-(n-1)W(\ell,n-1,m)$ we end up getting an expression for which the sum cancels out to give just the recurrence:
		\begin{align*} 
		W(\ell,n,m) &= (n-1)W(\ell-1,n-1,m) + (n-1)W(\ell,n-1,m) \\ 
		&\;+ W(\ell,n-1,m-1) -(n-1)(n-2)W(\ell-1,n-2,m).
		\end{align*}
\end{obs}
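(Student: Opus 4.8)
The plan is to take the recurrence of the preceding Proposition at face value and manipulate the convolution sum it contains until it collapses. I would apply that recurrence twice: once as it stands for $W(\ell,n,m)$, and once with $n$ replaced by $n-1$ for $W(\ell,n-1,m)$ (this second application is valid for $n\geq 3$; the case $n=2$ is immediate). Forming the combination $W(\ell,n,m)-(n-1)W(\ell,n-1,m)$ and expanding both summands then produces the ``leading'' terms of the two applications together with the difference of two convolution sums.

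The leading terms are $(n-1)W(\ell-1,n-1,m)$, coming from the first application, and $-(n-1)(n-2)W(\ell-1,n-2,m)$, coming from $-(n-1)$ times the second; these are already two of the four terms of the asserted recurrence. So everything reduces to the identity
\begin{align*}
\sum_{j=0}^{n-1}\binom{n-1}{j}\,j!\,W(\ell,n-1-j,m-1)&-(n-1)\sum_{j=0}^{n-2}\binom{n-2}{j}\,j!\,W(\ell,n-2-j,m-1)\\
&=W(\ell,n-1,m-1).
\end{align*}
The key move is to reindex by the complementary variable, $i:=n-1-j$ in the first sum and $i:=n-2-j$ in the second, using the elementary identities $\binom{n-1}{j}\,j!=\tfrac{(n-1)!}{(n-1-j)!}=\tfrac{(n-1)!}{i!}$ and $(n-1)\binom{n-2}{j}\,j!=\tfrac{(n-1)!}{(n-2-j)!}=\tfrac{(n-1)!}{i!}$. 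After the substitution the first sum becomes $\sum_{i=0}^{n-1}\tfrac{(n-1)!}{i!}\,W(\ell,i,m-1)$ and the subtracted one becomes $\sum_{i=0}^{n-2}\tfrac{(n-1)!}{i!}\,W(\ell,i,m-1)$, i.e.\ the former with its top index $i=n-1$ deleted; the difference telescopes to the single surviving term $\tfrac{(n-1)!}{(n-1)!}\,W(\ell,n-1,m-1)=W(\ell,n-1,m-1)$. Putting the leading terms back in gives exactly the four-term recurrence in the statement.

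I do not expect a genuine obstacle here: once the previous Proposition is in hand, the assertion is a purely algebraic consequence, and with the reindexing set up it is a one-line telescoping. The only thing that needs care is keeping the two summation ranges honest — the first sum runs through $j=n-1$, the second only through $j=n-2$, and it is precisely this off-by-one that leaves the residual term $W(\ell,n-1,m-1)$; confusing the ranges would make the cancellation look complete. One also notes that border terms such as $W(\ell,0,m-1)$ and $W(\ell,1,m-1)$ are harmless (they equal $0$ or $1$). A purely bijective alternative — sorting $\pi\in\mathscr{W}(\ell,n,m)$ by the joint roles of the elements $1$ and $n$, in the spirit of the proof of the Proposition — is also available, but the algebraic route above is shorter and entirely self-contained given what is already proved.
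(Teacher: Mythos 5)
Your proposal is correct and is exactly the computation the paper's remark alludes to: apply the Proposition to both $W(\ell,n,m)$ and $W(\ell,n-1,m)$, and the reindexing $i=n-1-j$ (resp.\ $i=n-2-j$) turns both convolution sums into $\sum_i \frac{(n-1)!}{i!}W(\ell,i,m-1)$ so that only the $i=n-1$ term $W(\ell,n-1,m-1)$ survives the subtraction. The paper gives no further details, so your explicit telescoping (and the note on the range of validity $n\geq 3$) is precisely the intended argument.
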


We establish now a bivariate generating function for $W(\ell,n,m)$ for a fixed $m$. 

\begin{teo}\label{genfun}
	We have the equality:
	\[W(\ell,n,m) = \frac{n!}{m!}[x^n s^\ell] \left(\tfrac{1}{(1-s)^m} \left(\log\left(\tfrac{1}{1-x}\right) - \log\left(\tfrac{1}{1-sx}\right)\right)^m\right) \]
\end{teo}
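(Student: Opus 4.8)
The plan is to recognize the right-hand side as a bivariate exponential generating function for the $W(\ell,n,m)$, with $x$ marking the ground-set size $n$ and $s$ marking the weight $\ell$, and then simply read off coefficients. A member of $\mathscr{W}(\ell,n,m)$ is a partition of $\{1,\dots,n\}$ into $m$ linearly ordered blocks whose total weight is the sum of the block weights; since the weight is additive over blocks and the blocks themselves are unordered, this is the standard ``unordered collection of $m$ atoms'' construction, so I would build everything from the generating function of a single block. To avoid fussing over symmetry I would first count \emph{sequences} of blocks and divide by $m!$ at the end.

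First I would analyze one block. Fix a finite set $S$ with $|S|=j\ge 1$; a linearly ordered block on $S$ is a linear order on $S$, and its weight is the number of elements of $S$ smaller than the first one. The key elementary observation is that for each $i$ with $0\le i\le j-1$ there are exactly $(j-1)!$ such orders of weight $i$: the weight equals $i$ exactly when the first element is the $(i+1)$-st smallest element of $S$, and the remaining $j-1$ elements may then be ordered arbitrarily. Hence a $j$-element block has weight enumerator $(j-1)!\sum_{i=0}^{j-1}s^i=(j-1)!\,\frac{1-s^j}{1-s}$, and the exponential generating function of a single block is
\[
B(x,s)\;=\;\sum_{j\ge1}(j-1)!\,\frac{1-s^j}{1-s}\,\frac{x^j}{j!}
\;=\;\frac{1}{1-s}\sum_{j\ge1}\frac{x^j-(sx)^j}{j}
\;=\;\frac{1}{1-s}\left(\log\tfrac{1}{1-x}-\log\tfrac{1}{1-sx}\right),
\]
using $\sum_{j\ge1}y^j/j=\log\frac{1}{1-y}$.

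Next I would assemble the $m$ blocks. By the product rule for exponential generating functions, $B(x,s)^m$ is the generating function counting sequences $(b_1,\dots,b_m)$ of nonempty linearly ordered blocks whose underlying sets partition $\{1,\dots,n\}$, with $x$ marking $n$ and $s$ marking $w(b_1)+\dots+w(b_m)$; the multinomial coefficients produced by the product account exactly for the ways of distributing the labels $\{1,\dots,n\}$ among the blocks. Each unordered partition in $\mathscr{W}(\ell,n,m)$ arises from exactly $m!$ such sequences, so
\[
\sum_{n\ge0}\ \sum_{\ell\ge0}W(\ell,n,m)\,s^\ell\,\frac{x^n}{n!}\;=\;\frac{1}{m!}\,B(x,s)^m ,
\]
and extracting $[x^n s^\ell]$ and multiplying by $n!$ yields precisely the claimed identity.

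The only genuinely delicate ingredient is the single-block weight count, together with the (bookkeeping) remark that the weight of a block depends only on the relative order of its elements, which is what legitimizes using the EGF product to count sequences of blocks. Everything past that point is the routine series manipulation above and the EGF product formula. As a consistency check one can test small cases against the tables: for $m=1$ one gets $W(\ell,n,1)=n!\,[x^n s^\ell]B(x,s)=(n-1)!$ for $0\le\ell\le n-1$, matching the rows $24,24,24,24,24$ and $120,120,120,120,120,120$.
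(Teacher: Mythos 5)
Your proposal is correct and follows essentially the same route as the paper: the key combinatorial fact (a block of size $j$ has exactly $(j-1)!$ linear orders of each weight $i\in\{0,\dots,j-1\}$, since prescribing the weight fixes the first element and leaves the rest free) is exactly the observation the paper uses, and your single-block series $B(x,s)$ is the paper's $\sum_{k\ge1}\frac{x^k}{k}(1+s+\cdots+s^{k-1})$. The only difference is presentational: you invoke the exponential-generating-function product rule where the paper carries out the same bookkeeping explicitly with multinomial coefficients over ordered tuples of blocks.
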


\begin{proof}
	Notice that it suffices to prove that:
	\begin{equation} \label{critica}
	W(\ell,n,m) = \frac{n!}{m!} [x^n s^{\ell}] \left( \sum_{k=1}^{\infty} \frac{x^k}{k} (1+s+\ldots+s^{k-1})\right)^m.
	\end{equation}
	This is because using the formula for the geometric series, the sum in the parentheses can be rewritten as $\frac{1}{1-s}\left(\sum_{k=1}^{\infty} \frac{x^k}{k} - \sum_{k=1}^{\infty} \frac{(sx)^k}{k}\right)$ which in turn is just \[\frac{1}{1-s}\left(\log\left(\frac{1}{1-x}\right) - \log\left(\frac{1}{1-sx}\right) \right)\] which gives the desired result. Now, to prove \eqref{critica} we proceed as follows. First notice that:
    \begin{align}
        m! W(\ell,n,m) = \sum_{\widetilde{\pi}} \sum_{\substack{(j_1,\ldots,j_m)\in \Z^m\\ j_1+\ldots+j_m = \ell \\ 0\leq j_i < |b_i|}} \prod_{i=1}^m (|b_i|-1)!
    \end{align}
    where the first sum runs over all the orderings $\widetilde{\pi}=(b_1,\ldots,b_m)$ of all elements $\pi=\{b_1,\ldots,b_m\}\in \mathscr{L}(n,m)$. This comes from the fact that for every such $\widetilde{\pi}$, if we choose how much weight to assign to each of the blocks, each block has its first element determined, and the remaining elements can be reordered in any fashion. Of course, this way we count every element of $\mathscr{W}(\ell,n,m)$ exactly $m!$ times. Taking out the product out of the second sum above, we get:
    \begin{align*}
        m! W(\ell,n,m) &= \sum_{\widetilde{\pi}} \left(\prod_{i=1}^m (|b_i|-1)! \right)\left| \left\{(j_1,\ldots,j_m)\in \Z^m : \sum_{i=1}^m j_i = \ell, 0\leq j_i < |b_i|\right\}\right|\\
        &= \sum_{\widetilde{\pi}} \left(\prod_{i=1}^m (|b_i|-1)! \right) [s^\ell]\left( \prod_{i=1}^{m}\sum_{j=0}^{|b_i|-1} s^{j} \right)\\
        &= [s^{\ell}] \sum_{\widetilde{\pi}} \left(\prod_{i=1}^m (|b_i|-1)! \sum_{j=0}^{|b_i|-1} s^{j}\right)
    \end{align*}
    Notice that the term inside the last sum does not take into account the whole element $\widetilde{\pi}=(b_1,\ldots,b_m)$, but only the size $|b_i|$ of each block. Thus, if we fix the sizes $|b_1|, \ldots, |b_m|$ of the blocks, we can recover exactly how many elements $\widetilde{\pi}$ have blocks of such sizes. Using multinomial coefficients, and abusing notation to write $b_i=|b_i|$:
    \begin{align*}
        m! W(\ell,n,m) &= [s^{\ell}] \sum_{\substack{(b_1,\ldots,b_m)\in\Z^m\\ b_1+\ldots+b_m = n \\ b_i \geq 0}} \binom{n}{b_1,\ldots,b_m}\left(\prod_{i=1}^m (b_i-1)! \sum_{j=0}^{b_i-1}
        s^{j}\right)\\
        &= [s^{\ell}] \sum_{\substack{(b_1,\ldots,b_m)\in\Z^m\\ b_1+\ldots+b_m = n \\ b_i \geq 0}} n! \left(\prod_{i=1}^m \frac{1}{b_i} \sum_{j=0}^{b_i-1} s_j\right)\\
        &= [s^\ell x^n] n! \left(\sum_{k=1}^n \frac{x^k}{k} (1+s+\ldots+s^{k-1})\right)^m,
    \end{align*}
    which proves \eqref{critica}.
\end{proof}

\begin{coro}\label{clave}
	For all $\ell,n,m$ one has:
	\[ W(\ell,n,m) = \sum_{j=0}^\ell \sum_{i=0}^{n-m} (-1)^{i+j} \binom{n}{j} \binom{m+\ell-j-1}{m-1} {j\brack {j-i}}{{n-j}\brack {m+i-j}}.\]
\end{coro}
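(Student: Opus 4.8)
The plan is to read off the coefficient $[x^{n}s^{\ell}]$ from the generating function of Theorem~\ref{genfun} by a direct computation, the only external ingredient being the classical exponential generating function of the unsigned Stirling numbers of the first kind, \[\frac{1}{p!}\left(\log\frac{1}{1-x}\right)^{p}=\sum_{a\geq p}{a \brack p}\frac{x^{a}}{a!}.\] First I would set $A=\log\frac{1}{1-x}$ and $B=\log\frac{1}{1-sx}$, expand $(A-B)^{m}=\sum_{i=0}^{m}\binom{m}{i}(-1)^{i}A^{m-i}B^{i}$ by the binomial theorem, and replace each power using the identity above: $A^{m-i}=(m-i)!\sum_{a}{a \brack m-i}\frac{x^{a}}{a!}$ and $B^{i}=i!\sum_{b}{b \brack i}\frac{s^{b}x^{b}}{b!}$.

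The next step is to carry out two Cauchy products. Multiplying the series for $A^{m-i}$ and $B^{i}$ and extracting the coefficient of $x^{n}$ forces $a+b=n$, and gives $(m-i)!\,i!\sum_{b}{n-b \brack m-i}{b \brack i}\frac{s^{b}}{(n-b)!\,b!}$. Multiplying this by $\frac{1}{(1-s)^{m}}=\sum_{r\geq 0}\binom{m+r-1}{m-1}s^{r}$ and extracting the coefficient of $s^{\ell}$ turns $s^{b}$ into $\binom{m+\ell-b-1}{m-1}$ and restricts the sum to $0\leq b\leq\ell$, since for $b>\ell$ the coefficient $[s^{\ell-b}]\frac{1}{(1-s)^{m}}$ is genuinely zero (so one must keep that bound explicit rather than absorb it into the binomial). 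The prefactor collapses, because $\frac{n!}{m!}\binom{m}{i}(m-i)!\,i!=n!$ and $\frac{n!}{(n-b)!\,b!}=\binom{n}{b}$, and one is left with \[W(\ell,n,m)=\sum_{b=0}^{\ell}\sum_{i=0}^{m}(-1)^{i}\binom{n}{b}\binom{m+\ell-b-1}{m-1}{b \brack i}{n-b \brack m-i}.\]

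Finally I would reindex to recover the stated shape: rename $b$ as $j$ and substitute $i\mapsto j-i$. This sends $(-1)^{i}$ to $(-1)^{j+i}$, sends ${b \brack i}$ to ${j \brack j-i}$, and sends ${n-b \brack m-i}$ to ${n-j \brack m+i-j}$, and the range of the new index may be taken to be $0\leq i\leq n-m$, since every discarded or added term carries a vanishing Stirling factor (${j \brack j-i}=0$ when $i>j$, and ${n-j \brack m+i-j}=0$ when $i>n-m$ or $i<j-m$); this is precisely the asserted formula. I do not expect a real obstacle: this is a routine coefficient extraction, and the only points that call for care are invoking the first-kind Stirling exponential generating function correctly and checking that replacing the honest summation bounds by the clean bounds $0\leq j\leq\ell$, $0\leq i\leq n-m$ only introduces zero terms.
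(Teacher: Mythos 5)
Your proposal is correct and follows essentially the same route as the paper: expand $\bigl(\log\tfrac{1}{1-x}-\log\tfrac{1}{1-sx}\bigr)^m$ by the binomial theorem, substitute the exponential generating function of the unsigned Stirling numbers of the first kind, perform the Cauchy products in $x$ and then in $s$ against $\tfrac{1}{(1-s)^m}$, and reindex (your $i\mapsto j-i$ is exactly the paper's substitution $k=j-i$). Your explicit justification of the truncation to $0\leq j\leq\ell$ and the harmless extension to $0\leq i\leq n-m$ matches the remark the paper makes about changing the summation limits.
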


\begin{proof}
	From the exponential generating function of the Stirling numbers of the first kind \cite[pg. 351]{knuth} one has:
		\[ {\alpha \brack \beta} = \frac{\alpha!}{\beta!} [x^\alpha] \left(\log\left(\tfrac{1}{1-x}\right)\right)^\beta.\]
	Now, using Theorem \ref{genfun}, we have the chain of equalities:
	\begin{align}
		W(\ell,n,m) &= \frac{n!}{m!}[x^n s^\ell] \left(\tfrac{1}{(1-s)^m} \left(\log\left(\tfrac{1}{1-x}\right) - \log\left(\tfrac{1}{1-sx}\right)\right)^m\right) \nonumber \\
		&= \frac{n!}{m!} [x^ns^\ell] \left(\frac{1}{(1-s)^m} \sum_{k=0}^m (-1)^k \binom{m}{k} \left(\log\left(\tfrac{1}{1-x}\right)\right)^{m-k} \left(\log\left(\tfrac{1}{1-sx}\right)\right)^k \right)\nonumber\\
		&= n! [x^n s^\ell] \left(\frac{1}{(1-s)^m} \sum_{k=0}^m (-1)^k \frac{\log\left(\tfrac{1}{1-x}\right)^{m-k}}{(m-k)!}\frac{\log\left(\tfrac{1}{1-sx}\right)^{k}}{k!} \right)\nonumber\\
		&= n! [s^\ell] \left(\frac{1}{(1-s)^m} \sum_{k=0}^m (-1)^k \sum_{j=0}^n [x^{n-j}]\left(\frac{\log\left(\tfrac{1}{1-x}\right)^{m-k}}{(m-k)!}\right) [x^j]\left( \frac{\log\left(\tfrac{1}{1-sx}\right)^{k}}{k!}\right) \right)\nonumber\\
		&= n! [s^\ell] \left(\frac{1}{(1-s)^m} \sum_{k=0}^m (-1)^k \sum_{j=k}^{n-m+k} [x^{n-j}]\left(\frac{\log\left(\tfrac{1}{1-x}\right)^{m-k}}{(m-k)!}\right) [x^j]\left( \frac{\log\left(\tfrac{1}{1-sx}\right)^{k}}{k!}\right) \right)\nonumber\\
		&= n![s^\ell] \left( \frac{1}{(1-s)^m} \sum_{k=0}^m (-1)^k \sum_{j=k}^{n-m+k} \frac{1}{(n-j)!} {{n-j}\brack{m-k}} \frac{1}{j!} s^j{j\brack k}\right)\nonumber\\
		&= [s^\ell]\left( \frac{1}{(1-s)^m} \sum_{k=0}^m\sum_{j=k}^{n-m+k} (-1)^k \binom{n}{j}s^j {{n-j}\brack{m-k}} {{j}\brack{k}}\right)\nonumber\\
		&= [s^\ell]\left( \frac{1}{(1-s)^m} \sum_{j=0}^n\sum_{k=j-m+n}^{j} (-1)^k \binom{n}{j}s^j {{n-j}\brack{m-k}} {{j}\brack{k}}\right)\nonumber\\
		&= [s^\ell]\left( \frac{1}{(1-s)^m} \sum_{j=0}^n\sum_{i=0}^{n-m} (-1)^{j-i} \binom{n}{j} {{n-j}\brack{m+i-j}} {{j}\brack{j-i}}s^j\right)\nonumber\\
		&= \sum_{j=0}^{\ell} \sum_{i=0}^{n-m}(-1)^{j-i} \binom{n}{j} \binom{m-1+\ell-j}{m-1} {{n-j}\brack{m+i-j}} {j\brack{j-i}}\nonumber
	\end{align}
    where in the fifth equation we changed the limits from $0\leq j \leq n$ to $k\leq j \leq n-m+k$ given that the coefficients of the first factor inside the sum are zero for degree $n-j < m-k$ and the coefficients of the second factor are zero for $j< k$.
\end{proof}

\section{The Proof of Theorem \ref{main}}

For $0\leq m\leq n-1$, we will call $e_{k,n,m}$ the coefficient of $t^m$ in the polynomial $E_{k,n}(t)$. Our aim is to show that all these $e_{k,n,m}$ are positive.\\

For $a,b,u$ integer numbers such that $u\geq 0$, we will denote $P_{a,b}^u$ the sum of all possible products of $u$ different integer numbers chosen in the interval of integers $[a,b]$. This is:
    \[ P_{a,b}^u := \sum_{a\leq x_1 < \ldots < x_u \leq b} x_1\cdot\ldots\cdot x_u.\]
It is easy to see that for $a=1$ one gets:
    \begin{equation}\label{prostir}
        P^u_{1,b} = {{b+1}\brack b+1-u},
    \end{equation}
where the brackets denote the (unsigned) Stirling numbers of the first kind \cite{stanley}.

\begin{lema}\label{coeficientes}
	The following formula holds:
		\[ e_{k,n,m} = \frac{1}{(n-1)!} \sum_{j=0}^{k-1} \sum_{i=0}^{n-m-1} (-1)^{i+j} \binom{n}{j} (k-j)^m {{n-j}\brack{m+1+i-j}}{j \brack {j-i}}.\]
\end{lema}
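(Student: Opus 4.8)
The plan is to start from Katzman's formula (Theorem \ref{katz}) and extract the coefficient of $t^m$ from each of its summands. The first step is to expand the binomial coefficient appearing there into a product of linear factors in $t$:
\[ \binom{(k-j)t+n-1-j}{n-1}=\frac{1}{(n-1)!}\prod_{s=1-j}^{n-1-j}\bigl((k-j)t+s\bigr), \]
which is a polynomial of degree $n-1$ in $t$. To read off the coefficient of $t^m$ one selects $m$ of the $n-1$ factors to contribute the term $(k-j)t$ and lets the remaining $n-1-m$ factors contribute their constant terms; this gives
\[ [t^m]\binom{(k-j)t+n-1-j}{n-1}=\frac{(k-j)^m}{(n-1)!}\,P_{1-j,\,n-1-j}^{\,n-1-m}, \]
with $P$ the elementary-symmetric-type sum introduced just before the statement.

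The second step is to rewrite $P_{1-j,\,n-1-j}^{\,n-1-m}$ in terms of Stirling numbers of the first kind. For $j\geq 1$ the interval of integers $[1-j,\,n-1-j]$ contains $0$, so every product of distinct integers from this interval that uses $0$ is zero; the surviving subsets lie entirely in $[1-j,-1]\cup[1,\,n-1-j]$. Splitting a surviving subset according to the number $i$ of its negative elements yields the convolution
\[ P_{1-j,\,n-1-j}^{\,n-1-m}=\sum_{i}P_{1-j,\,-1}^{\,i}\cdot P_{1,\,n-1-j}^{\,n-1-m-i}. \]
Reflecting the negative interval, $P_{1-j,\,-1}^{\,i}=(-1)^iP_{1,\,j-1}^{\,i}$, and then \eqref{prostir} turns the two factors into $(-1)^i{{j}\brack{j-i}}$ and ${{n-j}\brack{m+1+i-j}}$ respectively; since these Stirling numbers vanish outside the range $0\leq i\leq n-m-1$, that interval may be taken as the range of summation. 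The case $j=0$ needs no separate treatment: the interval $[1,n-1]$ has no nonpositive elements and the sum collapses to its $i=0$ term ${{n}\brack{m+1}}$, in agreement with the formula.

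The final step is to substitute this evaluation back into Katzman's formula and sum over $j$ with the weight $(-1)^j\binom{n}{j}$, which produces precisely the claimed expression for $e_{k,n,m}$. The only point requiring genuine care is the index bookkeeping: identifying exactly which subsets of factors survive the vanishing caused by $0$, getting the sign $(-1)^i$ right when reflecting $[1-j,-1]$ onto $[1,j-1]$, and matching the summation bounds $0\leq i\leq n-m-1$ against the vanishing ranges of the two Stirling numbers. Everything else is a direct computation.
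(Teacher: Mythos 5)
Your proposal is correct and follows essentially the same route as the paper: extract $[t^m]$ from each summand of Katzman's formula via the product expansion, express it as $\frac{(k-j)^m}{(n-1)!}P^{n-1-m}_{1-j,n-1-j}$, split that sum according to the number of negative factors, reflect the negative interval to pick up $(-1)^i$, and convert to Stirling numbers of the first kind via \eqref{prostir}. Your added remarks on the subsets containing $0$ and on the $j=0$ case are points the paper leaves implicit, but the argument is the same.
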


\begin{proof}
	We will work with the formula \eqref{formula}. Observe that:
	\begin{align*}
		[t^m] \binom{(k-j)t+n-1-j}{n-1} &= \frac{1}{(n-1)!} [t^m] \left( ((k-j)t + n-1-j)  \cdot \ldots \cdot ((k-j)t + 1 - j)\right)\\
		&= \frac{1}{(n-1)!} (k-j)^m P^{n-1-m}_{1-j,n-1-j},
	\end{align*}
    Observe that one has the following equality:
		\begin{align*} 
		P^{n-1-m}_{1-j,n-1-j} &= \sum_{i=0}^{n-m-1} P^i_{1-j,-1} P^{n-1-m-i}_{1,n-1-j}\\
		&= \sum_{i=0}^{n-m-1} (-1)^i P^i_{1,j-1} P^{n-1-m-i}_{1,n-1-j}.
		\end{align*}
	Therefore, using \eqref{prostir} we have that \[P_{1-j,n-1-j}^{n-m-1} = \sum_{i=0}^{n-m-1} (-1)^i {j\brack {j-i}} {{n-j}\brack {m+1+i-j}},\]
	so, in particular,
	\[ [t^m] \binom{(k-j)t+n-1-j}{n-1} =\frac{1}{(n-1)!}\sum_{i=0}^{n-m-1} (-1)^i (k-j)^m {j\brack {j-i}} {{n-j}\brack {m+1+i-j}}.\]
	The result follows easily from \eqref{formula} and this last identity.
\end{proof}

\begin{obs}
    If we use the shorter name \[f_{j,n,m}:=\sum_{i=0}^{n-m-1}(-1)^i {j\brack {j-i}} {{n-j}\brack {m+1+i-j}},\] we can rewrite the formula of Lemma \ref{coeficientes} as follows:
    \begin{equation} \label{coefi}
        e_{k,n,m} = \frac{1}{(n-1)!} \sum_{j=0}^{k} (-1)^j \binom{n}{j} (k-j)^m f_{j,n,m},
    \end{equation}
where we changed the upper limit of the sum since, when $j=k$, we are adding $0$.
\end{obs}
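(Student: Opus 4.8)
The content of \eqref{coefi} itself is obtained from Lemma \ref{coeficientes} simply by substituting the definition of $f_{j,n,m}$ and checking that the $j=k$ summand vanishes: for $m\geq 1$ this is because it carries the factor $(k-k)^m$, and for $m=0$ because $f_{k,n,0}=P^{n-1}_{1-k,n-1-k}$ is the product of the $n-1$ consecutive integers $1-k,\dots,n-1-k$, one of which is $0$ whenever $1\leq k\leq n-1$. The real purpose of writing $e_{k,n,m}$ in the form \eqref{coefi} is to prepare the ground for Theorem \ref{main}: the plan is to rewrite the right-hand side as a \emph{nonnegative} combination of Eulerian numbers and weighted Lah numbers, after which positivity is automatic.

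The engine of the argument is Worpitzky's identity
\[
 r^m=\sum_{\ell=0}^{m}A(m,\ell)\binom{r+\ell}{m},
\]
where $A(m,\ell)$ is the Eulerian number counting the permutations of $\{1,\dots,m\}$ with exactly $\ell$ descents; this holds for every integer $r\geq 0$. Since the summation index $j$ in \eqref{coefi} runs only up to $k$, one always has $r=k-j\geq 0$, so we may replace $(k-j)^m$ by $\sum_{\ell=0}^{m}A(m,\ell)\binom{k-j+\ell}{m}$ and interchange the $j$- and $\ell$-sums. Writing $\binom{k-j+\ell}{m}=\binom{m+(k-m+\ell)-j}{m}$ and comparing the resulting inner sum over $j$ with the formula of Corollary \ref{clave} (with $m$ replaced by $m+1$ and $\ell$ replaced by $k-m+\ell$, noting that $f_{j,n,m}$ is precisely the inner sum over $i$ appearing there), one obtains
\[
 \sum_{j=0}^{k}(-1)^j\binom{n}{j}\binom{m+(k-m+\ell)-j}{m}f_{j,n,m}=W(k-m+\ell,\,n,\,m+1),
\]
and hence the combinatorial formula
\[
 e_{k,n,m}=\frac{1}{(n-1)!}\sum_{\ell=0}^{m}A(m,\ell)\,W(k-m+\ell,\,n,\,m+1).
\]

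With this formula in hand, Theorem \ref{main} follows at once. Both $A(m,\ell)$ and the weighted Lah numbers are cardinalities of finite sets, hence nonnegative, so it suffices to exhibit one $\ell\in\{0,\dots,m\}$ making both factors strictly positive; recall moreover that $W(L,n,m+1)>0$ for every $0\leq L\leq n-m-1$ (take a block of size $n-m$ realizing the prescribed weight and let the remaining $m$ elements form singletons). If $k\geq m$, take $\ell=0$: then $A(m,0)=1$ and $0\leq k-m\leq n-m-1$. If $k<m$, take $\ell=m-k$: then $A(m,m-k)>0$ because $0\leq m-k\leq m-1$, and $W(0,n,m+1)={n\brack {m+1}}>0$ because $1\leq m+1\leq n$. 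Finally $\Delta_{n,n}$ is a single point with constant Ehrhart polynomial $1$, which disposes of the remaining case.

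The step I expect to require the most care is the identification with $W(k-m+\ell,n,m+1)$. In Corollary \ref{clave} the outer summation runs only to $k-m+\ell$, whereas after applying Worpitzky it runs to $k$, so one must check that the additional terms $j=k-m+\ell+1,\dots,k$ are zero. They are, because for such $j$ the integer $m+(k-m+\ell)-j$ lies in $\{0,1,\dots,m-1\}$ — here one uses $\ell\geq 0$, so that $k-m+\ell\geq k-m$ and the left endpoint of this range is nonnegative — and therefore $\binom{m+(k-m+\ell)-j}{m}=0$. A secondary point is that Worpitzky requires $r=k-j\geq 0$; this is precisely why it pays to have passed from Lemma \ref{coeficientes} to \eqref{coefi} by extending the $j$-sum up to $k$ (rather than stopping at $k-1$), and why one should resist pushing the summation index any further.
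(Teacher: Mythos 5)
Your first paragraph is exactly the paper's (one-line) justification of this remark --- substitute the definition of $f_{j,n,m}$ into Lemma \ref{coeficientes} and note that the $j=k$ summand vanishes --- and your separate treatment of the $m=0$ case, where the vanishing comes from $f_{k,n,0}=P^{n-1}_{1-k,n-1-k}=0$ rather than from the factor $(k-k)^m$, is a detail the paper glosses over. The remaining paragraphs prove the subsequent theorem rather than this remark; they are correct and essentially equivalent to the paper's argument, the only difference being that you apply Worpitzky's identity coefficientwise as $r^m=\sum_{\ell}A(m,\ell)\binom{r+\ell}{m}$, whereas the paper packages the same step as the generating-function identity $\sum_{j\geq 0} j^m x^j=(1-x)^{-(m+1)}\sum_{\ell}A(m,\ell)x^{\ell+1}$ and then reads off a Cauchy product.
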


Now we are ready to state and prove the result from which our main theorem follows.

\begin{teo}
	For all $k,n,m$ as above, we have that:
		\[ e_{k,n,m} = \frac{1}{(n-1)!}\sum_{\ell=0}^{k-1} W(\ell,n,m+1) A(m,k-\ell-1),\]
	where $A(m,k-\ell-1)$ stands for the \textit{Eulerian numbers} \cite{knuth,stanley}. In particular $e_{k,n,m}$ is positive.
\end{teo}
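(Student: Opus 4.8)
The plan is to feed the closed formula from Corollary~\ref{clave} into the claimed right-hand side and watch everything collapse, the only non-formal ingredient being the Worpitzky identity for Eulerian numbers. The first step is to specialize Corollary~\ref{clave} to $m+1$ in place of $m$ and notice that it already repackages the quantity $f_{j,n,m}$ introduced in the Remark after Lemma~\ref{coeficientes}: comparing the two expressions gives
\[ W(\ell,n,m+1) = \sum_{j=0}^{\ell} (-1)^j \binom{n}{j} \binom{m+\ell-j}{m}\, f_{j,n,m}. \]

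Next I would substitute this into $\tfrac{1}{(n-1)!}\sum_{\ell=0}^{k-1} W(\ell,n,m+1)\,A(m,k-\ell-1)$, interchange the two summations (the variable $j$ then runs from $0$ to $k-1$, since $j\le\ell\le k-1$), and pull out everything not depending on $\ell$:
\[ \frac{1}{(n-1)!}\sum_{\ell=0}^{k-1} W(\ell,n,m+1)\,A(m,k-\ell-1) = \frac{1}{(n-1)!}\sum_{j=0}^{k-1}(-1)^j\binom{n}{j}f_{j,n,m}\left(\sum_{\ell=j}^{k-1}A(m,k-\ell-1)\binom{m+\ell-j}{m}\right). \]
After the substitution $s=k-1-\ell$ and writing $x=k-j$ (a positive integer), the inner sum becomes $\sum_{s=0}^{x-1}A(m,s)\binom{x+m-1-s}{m}$.

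The heart of the argument is that this equals $x^m=(k-j)^m$. This is the Worpitzky identity $x^m=\sum_s A(m,s)\binom{x+s}{m}$ in its reversed form $x^m=\sum_s A(m,s)\binom{x+m-1-s}{m}$, obtained from the symmetry $A(m,s)=A(m,m-1-s)$; one only needs the routine boundary check that passing from the range $0\le s\le m-1$ to $0\le s\le x-1$ changes nothing, because $A(m,s)=0$ for $s\ge m$ and $\binom{x+m-1-s}{m}=0$ whenever $x\le s\le m-1$ (the top entry is then a nonnegative integer strictly below $m$). With this, the right-hand side reduces to $\tfrac{1}{(n-1)!}\sum_{j=0}^{k-1}(-1)^j\binom{n}{j}(k-j)^m f_{j,n,m}$, which is precisely $e_{k,n,m}$ by \eqref{coefi} (recall the $j=k$ term there vanishes for $1\le k\le n-1$).

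Positivity is then immediate: $W(\ell,n,m+1)$ and $A(m,k-\ell-1)$ are both cardinalities of finite sets, so the sum has nonnegative terms, and it suffices to exhibit one that is strictly positive. For $m\ge 1$ this works because $W(\ell,n,m+1)>0$ exactly for $0\le\ell\le n-m-1$ while $A(m,k-\ell-1)>0$ exactly for $k-m\le\ell\le k-1$, and these ranges overlap since $1\le k\le n-1$; the remaining case $m=0$ is trivial ($e_{k,n,0}=1$, the constant term of an Ehrhart polynomial). The only delicate points are lining up the conventions for the Eulerian numbers and the Worpitzky identity and doing the small index bookkeeping in the inner sum; the rest is formal manipulation.
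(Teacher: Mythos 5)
Your proof is correct and is essentially the paper's argument run in reverse and at the level of coefficients rather than generating functions: the paper writes $e_{k,n,m}=\frac{1}{(n-1)!}[x^k]\,F_{n,m}(x)G_m(x)$ with $F_{n,m}(x)=\sum_j(-1)^j\binom{n}{j}f_{j,n,m}x^j$ and $G_m(x)=\sum_j j^mx^j$, identifies $[x^\ell]\bigl(F_{n,m}(x)/(1-x)^{m+1}\bigr)$ with $W(\ell,n,m+1)$ via exactly the same consequence of Corollary~\ref{clave} you isolate, and applies Worpitzky to $G_m$, whereas you substitute that expression for $W(\ell,n,m+1)$ into the right-hand side and collapse the inner sum with the reversed Worpitzky identity $\sum_s A(m,s)\binom{x+m-1-s}{m}=x^m$, which is the coefficient-level form of the same factorization. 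The index bookkeeping, the boundary checks, and the (more explicit than the paper's) positivity argument are all sound.
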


\begin{proof}
	From equation \eqref{coefi} we can see that:
	\[ e_{k,n,m} = \frac{1}{(n-1)!} [x^k] F_{n,m}(x) \cdot G_m(x),\]
	where $F_{n,m}(x) := \sum_{j=0}^n (-1)^j \binom{n}{j} f_{j,n,m} x^j$ and $G_m(x) := \sum_{j=0}^{\infty} j^m x^j$. It is a well known consequence of the \textit{Worpitzky Identity} \cite{knuth} that:
		\[ G_m(x) = \frac{1}{(1-x)^{m+1}} \sum_{j=0}^m A(m,j) x^{j+1},\] 
	where $A(m,j)$ is an Eulerian number (the number of permutations of $m$ elements with exactly $j$ descents).\\
	
	So we have that the product $F_{n,m}(x)\cdot G_m(x)$ is equal to:
		\[ \frac{1}{(1-x)^{m+1}} F_{n,m}(x) \sum_{j=0}^m A(m,j) x^{j+1}.\]
	
	We compute the product of the first two factors. Let:
		\[ C_{n,m}(x) := \frac{1}{(1-x)^{m+1}} F_{n,m}(x),\]
	and observe that:
	\begin{align*}
		[x^{\ell}] C_{n,m}(x) &= [x^\ell] \left( \frac{1}{(1-x)^{m+1}} F_{n,m}(x)\right) \\
		&= \sum_{j=0}^{\ell} (-1)^j \binom{n}{j} f_{j,n,m} \binom{m+\ell-j}{m}\\
		&= \sum_{j=0}^\ell \sum_{i=0}^{n-m-1} (-1)^{i+j} \binom{n}{j}\binom{m+\ell-j}{m} {j\brack{j-i}} {{n-j}\brack{m+1+i-j}}\\
		&= W(\ell,n,m+1).
	\end{align*}
	where in the last step we used Corollary \ref{clave}. In particular $C_{n,m}(x)$ is a polynomial, and	the result now follows computing the product $C_{n,m}(x) \cdot \sum_{j=0}^m A(m,j) x^{j+1}$ to get the identity of the statement.
\end{proof}

\begin{obs}
    For small values of $n$ and $m$, the author at first observed that the series $C_{n,m}$ as defined in the above proof always happened to be a polynomial with positive coefficients, and that $C_{n,m}(1) = L(n,m+1)$. Then he tried to see how to define a statistic on the set $\mathscr{L}(n,m)$ in such a way that all these coefficients were captured, and it turned out that experimenting numerically the statistic defined as the weight of a linearly ordered partition did work for all small cases. The general proof was then carried out.
\end{obs}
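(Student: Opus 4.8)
The plan is to reduce the statement to an identity between formal power series, using the Worpitzky form of $G_m(x)=\sum_{j\ge0}j^m x^j$ together with Lemma~\ref{coeficientes} and Corollary~\ref{clave}. First I would rewrite the formula \eqref{coefi} as the extraction of a single coefficient: since $(k-j)^m$ appears and we are summing over $j$, the expression $e_{k,n,m}=\frac{1}{(n-1)!}[x^k]\,F_{n,m}(x)G_m(x)$, where $F_{n,m}(x):=\sum_{j\ge0}(-1)^j\binom{n}{j}f_{j,n,m}x^j$ is a polynomial (it vanishes for $j>n$) and $G_m(x)=\sum_{j\ge0}j^m x^j$ is the generating function producing the factor $(k-j)^m$ after the substitution coming from the convolution. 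One has to be a little careful that the convolution indices line up, i.e.\ that $[x^k](F_{n,m}\cdot G_m)=\sum_j(-1)^j\binom nj f_{j,n,m}(k-j)^m$; this is immediate once one notes $[x^{k-j}]G_m(x)=(k-j)^m$.

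Next I would invoke Worpitzky's identity to write $G_m(x)=\frac{1}{(1-x)^{m+1}}\sum_{j=0}^m A(m,j)x^{j+1}$, so that
\[
F_{n,m}(x)G_m(x)=\Bigl(\tfrac{1}{(1-x)^{m+1}}F_{n,m}(x)\Bigr)\Bigl(\sum_{j=0}^m A(m,j)x^{j+1}\Bigr).
\]
Introduce $C_{n,m}(x):=\frac{1}{(1-x)^{m+1}}F_{n,m}(x)$. The heart of the argument is to identify $[x^\ell]C_{n,m}(x)$ with the weighted Lah number $W(\ell,n,m+1)$. To do this, expand $\frac{1}{(1-x)^{m+1}}=\sum_{r\ge0}\binom{m+r}{m}x^r$, convolve with $F_{n,m}$, and substitute the definition of $f_{j,n,m}$; one arrives precisely at
\[
[x^\ell]C_{n,m}(x)=\sum_{j=0}^{\ell}\sum_{i=0}^{n-m-1}(-1)^{i+j}\binom{n}{j}\binom{m+\ell-j}{m}{j\brack j-i}{n-j\brack m+1+i-j},
\]
which is exactly the right-hand side of Corollary~\ref{clave} with $m$ replaced by $m+1$ (note $\binom{m+\ell-j}{m}=\binom{(m+1)-1+\ell-j}{(m+1)-1}$ and the upper limit $n-m-1=n-(m+1)$ matches). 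Hence $C_{n,m}(x)=\sum_\ell W(\ell,n,m+1)x^\ell$ is an honest polynomial with nonnegative coefficients.

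Finally, multiplying $C_{n,m}(x)$ by $\sum_{j=0}^m A(m,j)x^{j+1}$ and extracting $[x^k]$ gives
\[
e_{k,n,m}=\frac{1}{(n-1)!}\sum_{j=0}^m A(m,j)\,W(k-j-1,n,m+1),
\]
and reindexing $\ell=k-j-1$ yields the stated formula $e_{k,n,m}=\frac{1}{(n-1)!}\sum_{\ell=0}^{k-1}W(\ell,n,m+1)A(m,k-\ell-1)$; positivity is then immediate since Eulerian numbers and weighted Lah numbers are nonnegative, and at least one term (e.g.\ $\ell$ with $W(\ell,n,m+1)>0$, which exists for $0\le\ell\le n-1-m$, paired with a nonzero Eulerian number) is strictly positive. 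The main obstacle I expect is the bookkeeping in the middle step: matching the double sum for $[x^\ell]C_{n,m}(x)$ exactly against Corollary~\ref{clave}, in particular checking that the range of $i$ and the shift $m\mapsto m+1$ are consistent and that no boundary terms are lost when $F_{n,m}$ is truncated; everything else is routine manipulation of generating functions.
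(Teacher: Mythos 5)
Your proposal is correct, and it reproduces essentially verbatim the paper's own argument for the theorem that this remark comments on (the remark itself is expository and requires no proof): the same rewriting of $e_{k,n,m}$ as $\frac{1}{(n-1)!}[x^k]F_{n,m}(x)G_m(x)$, the same use of Worpitzky's identity, the same identification of $[x^\ell]C_{n,m}(x)$ with $W(\ell,n,m+1)$ via Corollary~\ref{clave} with $m$ shifted to $m+1$, and the same final convolution. No substantive differences.
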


\section{Acknowledgments}

The author wants to thank Davide Bolognini, Luca Moci and Matthias Beck for the useful suggestions and careful reading of the first draft, and Mat\'ias Hunicken for writing a part of the program that the author used to come up with the idea of defining the numbers $W(\ell,n,m)$. Also he wants to thank the reviewers of the article for the remarks and suggestions made to improve it. The author is supported by the Marie Sk{\l}odowska-Curie PhD fellowship as part of the program INdAM-DP-COFUND-2015.

\bibliography{bibliopaper} 
\bibliographystyle{plain}

\end{document}